\newtheorem{theorem}{Theorem}
\theoremstyle{definition}
\newtheorem{remark}[theorem]{Remark}
\def\geq{\geqslant}
\def\leq{\leqslant}
\begin{document}
 
\title[On the minimal genus]{Remarks on the minimal genus of curves linearly moving on a surface}

\author{Ciro Ciliberto}
\address{Dipartimento di Matematica, Universit\`a di Roma Tor Vergata, Via O. Raimondo
 00173 Roma, Italia}
\email{cilibert@axp.mat.uniroma2.it}
 
 
\keywords{Linear systems, genus}
 
\maketitle

\begin{abstract} Given a smooth, irreducible, projective surface $S$, let $g(S)$ be the minimum geometric genus of an irreducible curve that moves in a linear system of positive dimension on $S$. We determine the value of this birational invariant for a general surface of degree $d$ in $\mathbb P^3$ and give a bound for $g(S)$ if $S$ is a general polarised K3 or abelian surface.   
\end{abstract}

\section*{Introduction} Let $S$ be a smooth, irreducible, projective surface $S$. One can introduce a basic birational invariant of $S$ to be the minimum geometric genus of an irreducible curve on $S$. Determining this invariant is not difficult in some cases. For instance for surfaces with negative Kodaira dimension and for Enriques surfaces it is 0. It is 1 for bielliptic surfaces and 2 for general abelian surfaces. It is at most 1 for elliptic surfaces. However in general, determining this invariant is quite complicated. This question
has been examined in a number of interesting cases. For instance if $S$ is a very general surface of degree $d\geq 5$ in $\mathbb P^3$ (see \cite {Xu}) and if $S$ is a K3 surface (in this case this invariant is 0, see \cite {MM}, Appendix).   

In this paper we want to introduce a related but different birational invariant $g(S)$ of a surface $S$. Namely $g(S)$ is by definition the minimum geometric genus of an irreducible curve that moves in a linear system of positive dimension on $S$. Again there are cases in which it is easy to determine $g(S)$. For instance $g(S)=0$ characterizes rational surfaces. For ruled surfaces $S$ of genus $g$ one has $g(S)=g$. The value $g(S)=1$ characterizes non--rational surfaces with a rational pencil of elliptic curves, like, for instance, Enriques surfaces or regular elliptic surfaces. However the determination of $g(S)$ is quite complicated in general. 

In this note we determine $g(S)$ for $S$ a very general surface of degree $d\geq 5$ in $\mathbb P^3$, proving that it equals the genus of plane sections of $S$ and actually only positive dimensional linear system of plane sections have curves with that genus (see Theorem \ref {thm:main1} below). Then we give a lower bound for $g(S)$ for general polarised K3 and abelian surfaces $(S,H)$ (see Theorems \ref {thm:main2} and  \ref {thm:main3}), though we do not believe that this bound is in general sharp.    

The idea of the proof of the above three results is the same for all of them and it is quite elementary: it consists in  bounding the genus of curves moving in rational pencils by using basic computations of genus and self--intersection of such curves.

\medskip

{\bf Acknowledgements:} The author is a member of GNSAGA of INdAM. 

As soon as this note appeared on the math arxiv, David Stapleton kindly pointed out to the author the paper \cite {EL}, in which in turn the paper \cite {Ko} was cited. Unfortunately the author was not aware of these two papers which contain results that strictly include the ones of this note. The author is very grateful to David Stapleton.  

\section{Surfaces in $\mathbb P^3$}\label{sec:p3} 

Let $S$ be a smooth surface of degree $d\geq 5$ in $\mathbb P^3$ (in the cases $d\leq 3$ the surface $S$ is rational hence $g(S)=0$, and in the case $d =4$ the surface $S$ is a K3 surface and we will deal with it in the sext section). We denote by $H$ the plane section class of $S$. We will assume that ${\rm Pic}(S)\cong \mathbb Z$ generated by $H$, a property that is enjoyed by very general surfaces of degree $d\geq 4$ by the  Noether--Lefschetz theorem. 

If $m$ is any positive integer, we denote by $g_{d,m}$ the arithmetic genus of the curves in the linear system $|mH|$. One has
\begin{equation*}\label{eq:gen}
g_{d,m}=\frac {md(m+d-4)}2 +1.
\end{equation*}
We set $\dim(|mH|)=r_{d,m}$ and one has
\[
r_{d,m}= \Bigg \{  \begin{array}{ccc}
&\frac {
m^ 3+6m^ 2+11m+6}6 \,\,\, &\text {if}\,\,\, m<d \\
& \frac {d\big ( 3m^ 2+3m (4-d)+(d^ 2-6d+11)\big)}6
 \,\,\, &\text {if}\,\,\, m\geqslant d. \\
\end{array}
\]

\begin{theorem}\label{thm:main1} Let $S$ be a  surface of degree $d\geq 5$ in $\mathbb P^3$ such that ${\rm Pic}(S)\cong \mathbb Z$ generated by $H$. Then 
$$g(S)=g_{d,1}=\frac {d(d-3)}2 +1$$
and if $\mathcal L\subset |mH|$ is a linear system of positive dimension whose general curve is irreducible of genus $g(S)=g_{d,1}$, then $m=1$ and   $\mathcal L$ has at most simple base points.
\end{theorem}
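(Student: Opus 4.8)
The plan is to combine the elementary genus computation announced in the introduction with the classical Bertini theorem. First I would dispose of the easy inequality $g(S)\le g_{d,1}$: a general plane section is a smooth irreducible curve of genus $g_{d,1}$ moving in the three--dimensional system $|H|$, so the minimum is at most $g_{d,1}$. For the reverse inequality, let $\mathcal L\subset|mH|$ be any linear system of positive dimension whose general member $C$ is irreducible, of geometric genus $g$. Since $\Pic(S)\cong\mathbb Z H$ there can be no fixed component, so I may replace $\mathcal L$ by a general pencil $\Lambda\subset\mathcal L$ without changing $g$. Blowing up the successive (possibly infinitely near) base points $p_i$ of $\Lambda$, with multiplicities $m_i=\mult_{p_i}C$, I obtain $\pi\colon\tilde S\to S$ with exceptional divisors $E_i$ and a base--point--free pencil inducing a morphism $\tilde S\to\mathbb P^1$, whose general fibre $\tilde C=\pi^*(mH)-\sum_i m_iE_i$ is, by generic smoothness in characteristic zero, a smooth model of $C$; hence the genus of $\tilde C$ equals $g$.

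The two identities I need come from intersection theory on $\tilde S$. Being a fibre, $\tilde C^2=0$, which together with $(\pi^*H)^2=d$ and $E_i\cdot E_j=-\delta_{ij}$ gives $\sum_i m_i^2=m^2d$. Using $K_S=(d-4)H$ (adjunction for a degree $d$ surface in $\mathbb P^3$) and $K_{\tilde S}=\pi^*K_S+\sum_iE_i$, adjunction on $\tilde S$ yields $2g-2=\tilde C\cdot K_{\tilde S}=md(d-4)+\sum_i m_i$, hence
$$g=\frac{md(d-4)}2+1+\frac12\sum_i m_i.$$
Thus bounding $g$ from below amounts to bounding $\sum_i m_i$ from below subject to $\sum_i m_i^2=m^2d$ with all $m_i\ge1$. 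For $m\ge2$ this is immediate: since $\big(\sum_i m_i\big)^2\ge\sum_i m_i^2$ one gets $\sum_i m_i\ge m\sqrt d$, so $g\ge\frac{md(d-4)}2+1+\frac{m\sqrt d}2$, and the elementary inequality $d^2-5d+2\sqrt d>0$ for $d\ge5$ shows this strictly exceeds $g_{d,1}$. Therefore no system with $m\ge2$ can attain the minimum.

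The delicate case, and the real obstacle, is $m=1$, where the crude estimate only gives $\sum_i m_i\ge\sqrt d$, which is far too weak; here one must prove that every base point is simple, i.e. all $m_i=1$ and $\sum_i m_i=d$. This is exactly where Bertini is decisive. For $m=1$ the pencil $\Lambda$ is a pencil of planes through a fixed axis $\ell$, with $\ell\not\subset S$ because $S$ contains no lines (any curve on $S$ has degree a multiple of $d\ge5$); its base scheme is $\ell\cap S$. By Bertini the general member is smooth away from the base locus, and at each base point $p\in\ell\cap S$ the general plane $\pi\supseteq\ell$ satisfies $\pi\ne T_pS$, so $C=\pi\cap S$ is smooth at $p$ as well; hence every $m_i=1$, $\sum_i m_i=d$, and $g=g_{d,1}$ with only simple base points. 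Combining the three cases gives $g(S)=g_{d,1}$, and shows that equality forces $m=1$, so that the extremal systems are exactly the positive--dimensional subsystems of $|H|$, with at most simple base points. The one point demanding genuine care is this borderline $m=1$ analysis: one must rule out, via Bertini together with the absence of lines on $S$, that a pencil of plane sections could have an everywhere--singular general member, which would otherwise produce a curve of genus $g_{d,1}-1$ and destroy the statement.
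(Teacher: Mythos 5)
Your proof is correct, and in two places it genuinely diverges from (and improves on) the paper's argument, even though the skeleton --- reduce to a pencil, blow up the base points, and exploit $\sum_i m_i^2=m^2d$ together with $2g-2=md(d-4)+\sum_i m_i$ --- is the same. For $m\geq 2$ the paper bounds $\sum_i m_i$ from below via the dimension count $1=\dim(\mathcal L)\geq r_{d,m}-\sum_i \frac{m_i(m_i+1)}{2}$, whereas you use only the numerical inequality $\big(\sum_i m_i\big)^2\geq\sum_i m_i^2$, which is both simpler and, for these parameters, sharper ($m\sqrt d\geq 2\sqrt 5\approx 4.47$ versus the paper's roughly $3.53$); either route disposes of $m\geq2$. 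The more significant difference is at $m=1$: there the paper's dimension count only yields $\sum_i m_i\geq 2$, which does not exclude a pencil of plane sections with a multiple base point --- such a pencil would have $\sum_i m_i<\sum_i m_i^2=d$ and hence general member of genus strictly less than $g_{d,1}$ --- and the paper's written proof in fact stops after handling $m\geq2$, leaving both the lower bound $g(S)\geq g_{d,1}$ and the ``at most simple base points'' clause without an explicit argument. Your Bertini analysis (the axis $\ell$ is not contained in $S$ because ${\rm Pic}(S)\cong\mathbb Z H$ forbids lines, and the general plane through $\ell$ differs from $T_pS$ at every $p\in\ell\cap S$) is exactly the step needed to close this, so your treatment of the borderline case is more complete than the paper's; you correctly identified it as the delicate point.
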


\begin{proof} Of course $g(S)\leq g_{d,1}$. We want to prove that  equality holds.

Let $\mathcal L\subseteq |mH|$ be a linear system of positive dimension $r$ whose general curve is irreducible of genus $g$. By imposing $r-1$ general simple base points to $\mathcal L$ we can assume that $\mathcal L$ is a pencil, i.e., $r=1$. Then let $p_1,\ldots, p_h$ be the base points of $\mathcal L$ (they can be proper or infinitely near), and let $m_1,\ldots, m_h$ be their respective multiplicities. We have
\begin{equation}\label{eq:deg}
m^2d=(mH)^2=\sum_{i=1}^h m_i^2.
\end{equation}
By adjunction and by \eqref {eq:deg}, we have
\begin{equation}\label{eq:gen2}
2g-2=md(d-4)+\sum_{i=1}^h m_i.
\end{equation}

Moreover we have
$$
1=\dim(\mathcal L)\geq r_{d,m}- \sum_{i=1}^h \frac {m_i(m_1+1)}2
$$
i.e., 
$$
\sum_{i=1}^h m_i^2+\sum_{i=1}^hm_i-2r_{d,m}+2\geq 0,
$$
and therefore
$$
\Big(\sum_{i=1}^h m_i\Big)^2+\sum_{i=1}^hm_i-2r_{d,m}+2\geq 0,
$$
whence
$$
\sum_{i=1}^hm_i\geq \frac {-1+\sqrt{1+8(r_{d,m}-1)}} 2.
$$
By \eqref {eq:gen2}, we get
$$
g\geq \frac {3+\sqrt{1+8(r_{d,m}-1)}} 4+ \frac {md(d-4)}2.
$$
If $m\geq 2$ then $r_{d,m}\geq 9$, so we get 
$$
g\geq \frac {3+\sqrt {65}}4+d(d-4)=\frac {3+\sqrt {65}}4+2g_{d,1}-2-d.
$$
Since $d\geq 5$, we have $g_{d,1}>d$, hence 
$$
g>\frac {\sqrt {65}-5}4+g_{d,1}
$$
so $g>g_{d,1}$. This implies that $g=g(S)$ only if $m=1$, as wanted. \end{proof}

\section{K3 and abelian surfaces}\label{sec:k3}

In this section we let $(S,H)$ be a polarised K3 surface, with $H^2=2p-2$ and ${\rm Pic}(S)\cong \mathbb Z$ generated by $H$. 

If $m$ is any positive integer, we denote by $\gamma_{p,m}$ the arithmetic genus of the curves in the linear system $|mH|$. One has
\begin{equation}\label{eq:genk3}
\gamma_{p,m}=m^2(p-1)+1
\end{equation}
and 
$$
\dim(|mH|)=\gamma_{p,m}. 
$$

\begin{theorem}\label{thm:main2} Let $(S,H)$ be a polarised K3 surface, with $H^2=2p-2$ and ${\rm Pic}(S)\cong \mathbb Z$ generated by $H$.  Then 
$$
g(S)\geq \frac {3+\sqrt {1+8(p-1)}} 4.
$$
\end{theorem}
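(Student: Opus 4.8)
The plan is to mirror the argument already carried out for surfaces in $\mathbb P^3$ in Theorem~\ref{thm:main1}, adapting the genus, self--intersection, and dimension count to the K3 setting where $\O_S(K_S)\cong\O_S$ and $H^2=2p-2$. First I would let $\mathcal L\subseteq |mH|$ be a linear system of positive dimension whose general member is irreducible of genus $g$, and reduce to the case of a pencil ($\dim(\mathcal L)=1$) by imposing general simple base points, exactly as before. Writing $p_1,\dots,p_h$ for the (possibly infinitely near) base points with multiplicities $m_1,\dots,m_h$, the self--intersection identity becomes
\begin{equation*}
m^2(2p-2)=(mH)^2=\sum_{i=1}^h m_i^2,
\end{equation*}
and since $K_S$ is trivial the adjunction formula simplifies to
\begin{equation*}
2g-2=(mH)^2-\sum_{i=1}^h m_i(m_i-1)=\sum_{i=1}^h m_i,
\end{equation*}
the crucial simplification being that the $md(d-4)$ term present in the $\mathbb P^3$ computation is now absent.

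Next I would run the same dimension estimate. Using $\dim(|mH|)=\gamma_{p,m}=m^2(p-1)+1$ from \eqref{eq:genk3} and the fact that imposing a point of multiplicity $m_i$ costs at most $\frac{m_i(m_i+1)}2$ conditions, the inequality $1=\dim(\mathcal L)\geq \gamma_{p,m}-\sum_i \frac{m_i(m_i+1)}2$ rearranges to
\begin{equation*}
\sum_{i=1}^h m_i^2+\sum_{i=1}^h m_i-2(\gamma_{p,m}-1)\geq 0.
\end{equation*}
Bounding $\sum m_i^2\leq \big(\sum m_i\big)^2$ and solving the resulting quadratic in $\sum m_i$ gives
\begin{equation*}
\sum_{i=1}^h m_i\geq \frac{-1+\sqrt{1+8(\gamma_{p,m}-1)}}2.
\end{equation*}
Feeding this into $2g-2=\sum m_i$ yields $g\geq \tfrac34+\tfrac14\sqrt{1+8(\gamma_{p,m}-1)}$. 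Since $\gamma_{p,m}-1=m^2(p-1)$, the expression under the radical is $1+8m^2(p-1)$, which is smallest at $m=1$, giving the stated bound $g\geq \frac{3+\sqrt{1+8(p-1)}}4$.

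I do not expect a serious obstacle here, since the structure of the proof is identical to Theorem~\ref{thm:main1} and the trivial canonical class actually makes the bookkeeping cleaner. The one point requiring a little care is justifying the dimension estimate $\dim(\mathcal L)\geq \gamma_{p,m}-\sum_i \frac{m_i(m_i+1)}2$ for base points that may be infinitely near: imposing a point of multiplicity $m_i$ (proper or infinitely near) imposes at most $\binom{m_i+1}2$ linear conditions, so the inequality holds regardless of the configuration, and the bound $\sum m_i^2\leq(\sum m_i)^2$ only uses $m_i\geq 1$. Unlike the $\mathbb P^3$ case, here I would not claim that the minimum is attained only for $m=1$, since the theorem asserts only the lower bound; indeed the monotonicity in $m$ shows the bound is weakest exactly at $m=1$, which is why that value produces the inequality as stated.
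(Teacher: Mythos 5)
Your proposal matches the paper's proof essentially line for line: the same reduction to a pencil, the same identities $(mH)^2=\sum m_i^2$ and $2g-2=\sum m_i$, the same dimension count $1\geq \gamma_{p,m}-\sum_i\frac{m_i(m_i+1)}2$, and the same quadratic estimate, with the final monotonicity-in-$m$ observation made explicit where the paper leaves it implicit. The argument is correct (and you even silently fix the paper's typo $m_1$ for $m_i$ in the conditions count).
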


\begin{proof} Let $\mathcal L\subseteq |mH|$ be a linear system of positive dimension $r$ whose general curve is irreducible of genus $g(S)$. As in the proof of Theorem \ref {thm:main1}, we can assume that $\mathcal L$ is a pencil.  Let $p_1,\ldots, p_h$ be the base points of $\mathcal L$, and let $m_1,\ldots, m_h$ be their respective multiplicities. We have
$$
2m^2(p-1)=(mH)^2=\sum_{i=1}^h m_i^2.
$$
from which we deduce
\begin{equation}\label{eq:gen3}
2g(S)-2=\sum_{i=1}^h m_i.
\end{equation}

Moreover we have
$$
1=\dim(\mathcal L)\geq \dim(|mH|)- \sum_{i=1}^h \frac {m_i(m_1+1)}2=m^2(p-1)+1-\sum_{i=1}^h \frac {m_i(m_1+1)}2
$$
thus
$$
\sum_{i=1}^h m_i^2+\sum_{i=1}^hm_i-2m^2(p-1)\geq 0,
$$
and therefore
$$
\Big(\sum_{i=1}^h m_i\Big)^2+\sum_{i=1}^hm_i-2m^2(p-1)\geq 0,
$$
whence
$$
\sum_{i=1}^hm_i\geq \frac {-1+\sqrt{1+8m^2(p-1)}} 2.
$$
The assertion follows by \eqref {eq:gen3}. \end{proof}

\begin{remark}\label{rem:k3} We do not believe the bound for $g(S)$ in Theorem \ref {thm:main2} is sharp in general. However it is so for $p=2$. Indeed, in this case,  Theorem \ref {thm:main2} gives $g(S)\geq 2$. On the other hand, of course $g(S)\leq p=2$, hence $g(S)=2$. Moreover, from the proof of Theorem \ref {thm:main2}, we see that if $\mathcal L\subseteq |mH|$, with $m\geq 2$, has irreducible general curves of genus $g$ then $g>1$. So we can have $g=g(S)$ only if $m=1$. 

In the case $p=3$, the proof of Theorem \ref {thm:main2} implies the result $g(S)=3$ which is attained for $m=1$ and possibly only for $m=2$. However we believe that  linear systems $\mathcal L\subseteq |2H|$ of positive dimension with general irreducible curve $C$ are such that the geometric genus of $C$ is larger than $3$, so that only the case $m=1$ should be possible.
\end{remark}

The proof of Theorem \ref {thm:main2} can be easily adapted to the case of abelian surfaces. We limit ourselves to stating the result. 

\begin{theorem}\label{thm:main3} Let $(S,H)$ be a polarised abelian surface, with 
a polarization of type $(1,a)$ and ${\rm Num}(S)\cong \mathbb Z$ generated by $H$.  Then if $a\geq 2$ one has 
\begin{equation}\label{eq:lops}
g(S)\geq \frac {3+\sqrt {8a-15}} 4,
\end{equation}
whereas if $a=1$ one has  $g(S)\geq 3$. 
\end{theorem}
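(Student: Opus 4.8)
The plan is to mirror the proof of Theorem \ref{thm:main2} almost verbatim, adjusting only the two numerical inputs that distinguish an abelian surface from a K3 surface: the self-intersection of the polarisation and the dimension of the complete linear system. For a polarisation of type $(1,a)$ one has $H^2=2a$, and by Riemann--Roch on an abelian surface (together with vanishing of higher cohomology for an ample line bundle) one has $\dim(|H|)=\frac{H^2}{2}-1=a-1$, and more generally $\dim(|mH|)=\frac{m^2H^2}{2}-1=m^2a-1$. The arithmetic genus of a curve in $|mH|$ is $\frac{(mH)^2}{2}+1=m^2a+1$, using that the canonical class is trivial so that adjunction reads $2p_a-2=(mH)^2$.

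First I would take a linear system $\mathcal L\subseteq|mH|$ of positive dimension whose general member is irreducible of genus $g(S)$, and reduce to the case of a pencil by imposing general simple base points, exactly as before. Writing $p_1,\dots,p_h$ for the base points with multiplicities $m_1,\dots,m_h$, the degree equation becomes $\sum_i m_i^2=(mH)^2=2m^2a$, and by adjunction (trivial canonical class) we get $2g(S)-2=\sum_i m_i$. Next I would use the dimension count $1=\dim(\mathcal L)\geq\dim(|mH|)-\sum_i\frac{m_i(m_i+1)}{2}=m^2a-1-\sum_i\frac{m_i(m_i+1)}{2}$, which rearranges to $\sum_i m_i^2+\sum_i m_i\geq 2m^2a-4$. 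Replacing $\sum_i m_i^2$ by the smaller quantity $(\sum_i m_i)^2$ and solving the resulting quadratic inequality in $\sum_i m_i$ yields
\[
\sum_{i=1}^h m_i\geq \frac{-1+\sqrt{1+8(m^2a-2)}}{2}=\frac{-1+\sqrt{8m^2a-15}}{2},
\]
and combining with $2g(S)-2=\sum_i m_i$ gives $g(S)\geq\frac{3+\sqrt{8m^2a-15}}{4}$. Taking $m\geq 1$ and noting the bound is weakest at $m=1$ produces the inequality \eqref{eq:lops} for $a\geq 2$.

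The only genuinely new point, and the reason the statement splits into two cases, is the behaviour at $a=1$: there $\dim(|H|)=a-1=0$, so the complete linear system $|H|$ is just a point and the dimension estimate above degenerates (the quantity under the square root, $8a-15=-7$, is negative). Thus for $a=1$ one cannot use $m=1$ at all, and any positive-dimensional pencil must sit in $|mH|$ with $m\geq 2$; feeding $m\geq 2$, $a=1$ into the same quadratic estimate gives $g(S)\geq\frac{3+\sqrt{8\cdot 4\cdot 1-15}}{4}=\frac{3+\sqrt{17}}{4}>1$, and since $g(S)$ is an integer this forces $g(S)\geq 3$ (indeed $g(S)=2$ for general abelian surfaces is known, but here $\mathrm{Num}(S)\cong\mathbb Z$ with a principal polarisation behaves differently). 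The main obstacle is therefore not the computation but handling this degenerate case cleanly: one must verify that the sharper arithmetic at $m\geq 2$, $a=1$ really does push $g(S)$ past $2$, and confirm that the Riemann--Roch dimension formula and the adjunction computation are correctly transported from the K3 setting to abelian surfaces, where the canonical class vanishes but the irregularity is nonzero, so that the complete linear system dimensions differ from those on a K3 by the expected shift.
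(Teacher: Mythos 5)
The paper does not print a proof of Theorem \ref{thm:main3}; it only says that the proof of Theorem \ref{thm:main2} "can be easily adapted", and your adaptation for $a\geq 2$ is exactly the intended one and is correct: $H^2=2a$, $\dim(|mH|)=m^2a-1$, the degree equation $\sum_i m_i^2=2m^2a$, adjunction $2g-2=\sum_i m_i$ (trivial canonical class), and the dimension count $\left(\sum_i m_i\right)^2+\sum_i m_i\geq 2m^2a-4$ yield \eqref{eq:lops} at $m=1$. (A small wording slip: $\left(\sum_i m_i\right)^2$ is the \emph{larger} quantity, not the smaller one; the inequality you write is nevertheless in the right direction.) You are also right that $a=1$ forces $m\geq 2$ because $\dim(|H|)=0$.

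The genuine gap is in the final step of the $a=1$ case. From $g(S)\geq \frac{3+\sqrt{17}}{4}\approx 1.78$ and integrality you can only conclude $g(S)\geq 2$, not $g(S)\geq 3$; the inference as written is a non sequitur. Indeed the weakened quadratic estimate (which discards the exact value of $\sum_i m_i^2$) cannot give $g(S)\geq 3$ when $m=2$: it yields $\sum_i m_i\geq \frac{-1+\sqrt{17}}{2}\approx 1.56$, hence only $\sum_i m_i\geq 2$ and $g\geq 2$. To close the case $a=1$ you must use the degree equation directly: for $m\geq 2$ one has $\left(\sum_i m_i\right)^2\geq \sum_i m_i^2=2m^2\geq 8$, so $\sum_i m_i\geq 3$, and since $\sum_i m_i=2g-2$ is even this forces $\sum_i m_i\geq 4$, whence $g\geq 3$. (Equivalently, $\sum_i m_i\geq m\sqrt 2\,$ already gives $g\geq 1+\sqrt 2>2$.) With this correction the whole statement follows.
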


Again we believe the bound for $g(S)$ in Theorem \ref {thm:main3} is far from being sharp.  In particular, the bound  \eqref {eq:lops} is non--trivial only for $a$ large enough. Indeed for $2\leq a\leq 4$ it gives $g(S)\geq 1$ and for $a=5$ it gives $g(S)\geq 2$, that are trivial bounds.


\begin{thebibliography}{}

\bibitem {EL} L. Ein, R. Lazarsfeld, \emph{The Konno invariant of some algebraic varieties}, European J. of Math., {\bf 6} (2), (2020), 420--429.

\bibitem {Ko} K. Konno, \emph{Minimal pencils on smooth surfaces in $\mathbb P^3$}, Osaka J. of Math., {\bf 45} (3), (2008--09), 789--805.

\bibitem {MM} S. Mori, S. Mukai, \emph{The uniruledness of the moduli space of curves of genus 11}, Algebraic Geometry,
 Proc. Tokyo/Kyoto, 334--353, Lecture Notes in Math. {\bf 1016}, Springer, Berlin, 1983.

\bibitem {Xu} G. Xu, \emph{Subvarieties of general hypersurfaces in projective space},  Journal of Differential Geometry {\bf 39}, no. 1 (1994), 139--172.




 
\end{thebibliography}
\end{document}